\documentclass[12pt]{amsart}

\usepackage{amsmath,amssymb,amsthm}
\usepackage{booktabs}
\usepackage{microtype}
\usepackage{url}

\newtheorem{theorem}{Theorem}
\newtheorem{lemma}[theorem]{Lemma}

\newtheorem{corollary}[theorem]{Corollary}
\theoremstyle{definition}
\newtheorem{definition}[theorem]{Definition}
\theoremstyle{remark}
\newtheorem{remark}[theorem]{Remark}

\title{The Collision Invariant}

\author{Alexander S.\ Petty}
\email{alexander.petty@gmail.com}
\date{March 2026}
\subjclass[2020]{11A63, 11A07, 11B83, 11N05}

\begin{document}
\begin{abstract}
For a prime $p$ and base $b$, the digit function
$\delta(r) = \lfloor br/p \rfloor$ partitions the residues
$\{1, \ldots, p{-}1\}$ into $b$ contiguous bins. The
\emph{collision count} $C(g)$ records how many residues
share a bin with their image under multiplication by~$g$.
We prove four results about this function.

First, the \emph{gate width theorem}: exactly $b - 1$
multipliers satisfy $C(g) = 0$, given by the explicit
family $g = -u/(b{-}u) \bmod p$ for $u = 1, \ldots, b{-}1$.
The proof rests on a linearization that transforms the
contiguous Beatty bins into residue classes modulo~$b$.

Second, the \emph{finite determination theorem}: the
collision deviation
$S_{\ell}(p) = C(b^{\ell} \bmod p) - \lfloor(p{-}1)/b\rfloor$
depends only on $p \bmod b^{\ell+1}$. This reduces the
study of the collision invariant from an analytic problem
on primes to a combinatorial problem on a finite group.

Third, the \emph{reflection identity}:
$S_{\ell}(a) + S_{\ell}(m{-}a) = -1$ for $m = b^{\ell+1}$,
implying a grand mean of $-1/2$ and a pairing symmetry
across the group of units.

Fourth, the \emph{half-group theorem}: for every
non-trivial good slice~$n$, the wrapping set
$W_n = \{a : (n{+}1)a \bmod m < a\}$ has size exactly
$\phi(m)/2$. The bilateral symmetry $a \mapsto m - a$
swaps wrapping with non-wrapping.
\end{abstract}
\maketitle

% ============================================================
\section{Introduction}

The digit function
$\delta(r) = \lfloor br/p \rfloor$
assigns to each residue $r \in \{1, \ldots, p{-}1\}$ its
leading digit in base~$b$. It partitions the residues into
$b$ bins $B_d = \{r : \delta(r) = d\}$, each a contiguous
interval of length $\lfloor(p{-}1)/b\rfloor$ or
$\lfloor(p{-}1)/b\rfloor + 1$. This partition is a Beatty
partition~\cite{sos} determined by the three-distance
theorem applied to the sequence $\{r/p\}$ at scale~$1/b$.

Digit functions of this type arise in the study of digit
sums of primes~\cite{mauduit-rivat}, character sums over
digit-constrained sets~\cite{shparlinski}, and the
statistical properties of decimal
expansions~\cite{hardy-wright}. The present paper studies
a specific invariant derived from the interaction between
the bin partition and the multiplicative structure of
$(\mathbb{Z}/p\mathbb{Z})^{\times}$.

\begin{definition}
For $g \in (\mathbb{Z}/p\mathbb{Z})^{\times}$, the
\emph{collision count} is
\[
C(g) = \#\{r \in \{1, \ldots, p{-}1\} :
\delta(r) = \delta(gr \bmod p)\}.
\]
\end{definition}

The collision count measures how many residues share a bin
with their image under multiplication by~$g$. It is a
function on the multiplicative group that encodes the
interaction between the additive structure of the bins and
the multiplicative action of~$g$.

The \emph{collision deviation} at lag~$\ell$ is
\[
S_{\ell}(p) = C(b^{\ell} \bmod p)
- \left\lfloor \frac{p-1}{b} \right\rfloor,
\]
the departure of the collision count at the specific
multiplier $g = b^{\ell}$ from the bin size~$Q$.

% ============================================================
\section{The Linearization}

The key structural observation is that multiplication
by~$b$ transforms the bin partition into a congruence
partition.

\begin{lemma}[Conjugation]\label{lem:conjugation}
For $r \in \{1, \ldots, p{-}1\}$, let
$[x]_p = x \bmod p$ and set $x = [br]_p$.
Then $br = p\,\delta(r) + x$ and
\[
\delta(r) = \delta(s)
\quad\Longleftrightarrow\quad
[br]_p \equiv [bs]_p \pmod{b}.
\]
Under the permutation $r \mapsto [br]_p$, the bins
$B_d$ are exactly the residue classes modulo~$b$.
\end{lemma}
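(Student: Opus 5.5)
The plan is to read off the identity $br = p\,\delta(r) + x$ from the division algorithm, reduce it modulo~$b$ to turn equality of digits into a congruence, and then use that $r \mapsto [br]_p$ is a bijection to identify bins with residue classes. Throughout I assume $p \nmid b$ (automatic for a prime $p > b$, which is the intended setting), so that $b$ is invertible modulo~$p$ and $p$ is invertible modulo~$b$.

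\textbf{Step 1 (the identity).} Since $\delta(r) = \lfloor br/p \rfloor$, the division algorithm writes $br = p\lfloor br/p\rfloor + (br \bmod p)$, which is exactly $br = p\,\delta(r) + x$ with $x = [br]_p$. Because $p \nmid b$ and $1 \le r \le p-1$, we have $p \nmid br$, so $x \in \{1, \ldots, p-1\}$. Also $0 \le \delta(r) \le b-1$, since $br < bp$.

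\textbf{Step 2 (the equivalence).} Reducing the identity modulo~$b$ gives $0 \equiv p\,\delta(r) + [br]_p \pmod b$, that is,
\[
[br]_p \equiv -p\,\delta(r) \pmod{b}.
\]
Hence $[br]_p \equiv [bs]_p \pmod b$ holds if and only if $p(\delta(r)-\delta(s)) \equiv 0 \pmod b$. Since $\gcd(p,b)=1$, this is equivalent to $\delta(r) \equiv \delta(s) \pmod b$. Both digits lie in $\{0,\ldots,b-1\}$, so this congruence is equivalent to equality. This is the step that needs care: the argument uses both the coprimality of $p$ and $b$ and the fact that digits are confined to a complete residue system modulo~$b$. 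If either fails, the equivalence breaks.

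\textbf{Step 3 (bins become residue classes).} Multiplication by $b$ is invertible modulo~$p$, so $r \mapsto [br]_p$ permutes $\{1,\ldots,p-1\}$. By Step~2, this permutation sends $B_d$ into the residue class $c_d \equiv -pd \pmod b$ intersected with $\{1,\ldots,p-1\}$. The map $d \mapsto -pd \bmod b$ is a bijection of $\mathbb{Z}/b\mathbb{Z}$, again because $p$ is a unit modulo~$b$. So the $b$ bins are sent into $b$ distinct classes.

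It remains to upgrade ``into'' to ``onto''. The images of the bins partition $\{1,\ldots,p-1\}$, since the map is a permutation. Each image lies in its own class, and these classes are pairwise distinct. Therefore each class (within $\{1,\ldots,p-1\}$) is exactly the image of the corresponding bin. This proves the final assertion of Lemma~\ref{lem:conjugation}.
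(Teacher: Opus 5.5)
Your proposal is correct and follows the paper's argument: write $br = p\,\delta(r) + [br]_p$ by the division algorithm, reduce modulo~$b$, and use $\gcd(p,b)=1$. You also make explicit two points the paper leaves implicit. First, that $0 \le \delta(r) \le b-1$, so a congruence of digits modulo~$b$ forces equality. Second, the partition argument showing that each bin maps onto its entire residue class (within $\{1,\ldots,p-1\}$) under $r \mapsto [br]_p$.
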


\begin{proof}
Since $br = p\lfloor br/p \rfloor + (br \bmod p)
= p\,\delta(r) + x$, reducing modulo~$b$ gives
$0 \equiv p\,\delta(r) + x \pmod{b}$. Since
$\gcd(p, b) = 1$, the class of $x$ modulo~$b$ determines
$\delta(r)$ uniquely.
\end{proof}

This linearization converts the geometric problem (which
residues lie in the same contiguous interval?) into an
algebraic problem (which residues are congruent
modulo~$b$?). It is the foundation of all subsequent
results.

\begin{lemma}\label{lem:collision-congruence}
For every $g \in (\mathbb{Z}/p\mathbb{Z})^{\times}$,
\[
C(g) = \#\{x \in \{1, \ldots, p{-}1\} :
x \equiv [gx]_p \pmod{b}\}.
\]
\end{lemma}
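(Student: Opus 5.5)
We reindex the count in the definition of $C(g)$ through the permutation $r \mapsto x = [br]_p$ of $\{1,\ldots,p-1\}$ and then apply Lemma~\ref{lem:conjugation} to both $r$ and its image $gr$. Since $b$ is invertible modulo $p$, the map $r \mapsto [br]_p$ is a bijection of $\{1,\ldots,p-1\}$ onto itself. So we can count over $x$ in place of $r$, provided we express the collision condition $\delta(r)=\delta([gr]_p)$ in terms of $x$.

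Fix $r$ and put $s=[gr]_p$. By Lemma~\ref{lem:conjugation}, $\delta(r)=\delta(s)$ holds if and only if $[br]_p \equiv [bs]_p \pmod b$. The one computation needed is
\[
[bs]_p=[b\,[gr]_p]_p=[gbr]_p=[g\,[br]_p]_p=[gx]_p .
\]
This uses only that reduction modulo $p$ commutes with multiplication: both sides are the least nonnegative residue of $gbr$. The collision condition therefore becomes $x \equiv [gx]_p \pmod b$.

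Summing over $r$ and substituting $x=[br]_p$ gives
\[
C(g)=\#\{r : [br]_p\equiv [g[br]_p]_p \pmod b\}=\#\{x\in\{1,\ldots,p-1\} : x\equiv [gx]_p \pmod b\},
\]
where the last step is the bijectivity of $r\mapsto x$.

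There is no real obstacle. The only point needing care is to distinguish the integer representative $[\,\cdot\,]_p$ from the residue class. The congruence modulo $b$ must be applied to the actual integers in $\{1,\ldots,p-1\}$, not to the classes modulo $p$, so the identity $[b[gr]_p]_p=[g[br]_p]_p$ has to be stated as an equality of integers. It is such an equality, because both sides are the unique representative in $\{0,\ldots,p-1\}$ of the same class, and indeed lie in $\{1,\ldots,p-1\}$ since $p\nmid gbr$.
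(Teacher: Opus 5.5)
Your proof is correct and follows the paper's route: apply Lemma~\ref{lem:conjugation} to the pair $r$, $[gr]_p$, then reindex by the bijection $r\mapsto[br]_p$. You also spell out the identity $[b[gr]_p]_p=[g[br]_p]_p$ and the bijectivity step, both of which the paper's one-line proof leaves implicit.
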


\begin{proof}
Apply Lemma~\ref{lem:conjugation} to the pair $r$ and
$gr \bmod p$.
\end{proof}

% ============================================================
\section{The Gate Width Theorem}

\begin{lemma}\label{lem:no-collision}
Define $c \equiv b(1{-}g)^{-1} \pmod{p}$ with
$1 \le c \le p{-}1$. If $1 \le c \le b{-}1$, then
$C(g) = 0$.
\end{lemma}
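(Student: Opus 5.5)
The plan is to work entirely on the linearized side, using Lemma~\ref{lem:collision-congruence}. There $C(g)$ counts the $x \in \{1,\ldots,p-1\}$ with $x \equiv [gx]_p \pmod b$. So it suffices to assume that some such $x$ exists and derive a contradiction from $1 \le c \le b-1$. Note that $c$ is defined only when $g \not\equiv 1 \pmod p$, so I assume this throughout.

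Fix such an $x$ and set $y = [gx]_p$, so that $1 \le y \le p-1$. Write $x - y = bk$ with $k \in \mathbb{Z}$. Since $g \not\equiv 1$ we have $y \ne x$, hence $k \neq 0$. Since $|x-y| \le p-2$, we get $|k| \le (p-2)/b$. Reducing modulo $p$ gives $bk \equiv x - gx = (1-g)x \pmod p$. Multiplying by $(1-g)^{-1}$ yields the key congruence
\[
x \equiv k\,b(1-g)^{-1} \equiv kc \pmod p .
\]
The size bounds now pin down $x$ exactly. We have $|kc| \le (b-1)(p-2)/b < p$, so $kc$ is a nonzero integer strictly between $-p$ and $p$.

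The argument then splits by the sign of $k$.

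If $k > 0$, then $0 < kc < p$, so $x = kc$. This gives
\[
y = x - bk = k(c-b) < 0,
\]
because $c < b$. This contradicts $y \ge 1$.

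If $k < 0$, then $-p < kc < 0$, so $x = p + kc$. This gives
\[
y = x - bk = p + |k|(b - c) > p,
\]
which contradicts $y \le p-1$.

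Either way no colliding $x$ exists, and therefore $C(g)=0$.

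The only delicate step is the size bookkeeping. One must check that $|k|\,c < p$, so that the congruence $x \equiv kc$ becomes an equality up to a single shift by $p$. One must also handle the sign cases so that the hypothesis $c < b$ (rather than just $c \le p-1$) is exactly what forces $y$ out of range. I expect the bound $|k| \le (p-2)/b$ together with $c \le b-1$ to suffice, as sketched above.
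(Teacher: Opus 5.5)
Your proof is correct and follows essentially the same route as the paper: both work on the linearized side via Lemma~\ref{lem:collision-congruence}, write a collision as $x - y = bk$ (the paper's $m$ is your $-k$), derive $x \equiv kc \pmod p$, and split on the sign of $k$. The only cosmetic difference is how the contradiction is reached. The paper shows that $x - kc$ lies strictly between $0$ and $p$ while being divisible by $p$, whereas you first use $|kc| < p$ to pin down $x$ exactly and then show that $y = [gx]_p$ falls outside $\{1,\ldots,p-1\}$.
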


\begin{proof}
A collision requires $y = gx \bmod p$ with
$y \equiv x \pmod{b}$, so $y = x + mb$ for some
integer~$m$, and $x \equiv -mc \pmod{p}$.

If $m \ge 0$: from $x + mb \le p{-}1$ and $mc < p$
(since $c \le b{-}1$ and $m \le Q$), we get
$1 \le x + mc \le p{-}1{-}m(b{-}c) \le p{-}1$, so
$x + mc$ is a positive integer strictly less than $p$,
contradicting $x + mc \equiv 0 \pmod{p}$.

If $m < 0$: write $m = -n$ with $n \ge 1$. From
$x - nb \ge 1$, we get
$0 < x - nc = (x{-}nb) + n(b{-}c) \le p{-}1$,
contradicting $x - nc \equiv 0 \pmod{p}$.
\end{proof}

\begin{lemma}\label{lem:collision-exists}
If $c \ge b + 1$, then $C(g) \ge 1$.
\end{lemma}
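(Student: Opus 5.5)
The plan is to exhibit one explicit collision rather than to count collisions, working in the congruence form of Lemma~\ref{lem:collision-congruence}. By that lemma it suffices to produce a single $x \in \{1, \ldots, p{-}1\}$ with $[gx]_p \equiv x \pmod{b}$. The definition of $c$ already points to the candidate: I will take $x = c$ itself.

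First I unwind the defining relation. The hypothesis implicitly requires $g \not\equiv 1 \pmod p$, since otherwise $c$ is undefined. From $c \equiv b(1-g)^{-1} \pmod p$ we get
\[
c(1-g) \equiv b \pmod{p},
\qquad\text{so}\qquad
gc \equiv c - b \pmod{p}.
\]
Next I use the hypothesis $c \ge b+1$ together with $c \le p-1$. These give $1 \le c - b \le p - 1 - b < p$. Hence $c - b$ is already the least positive residue, and $[gc]_p = c - b$ exactly, with no wrap-around. Then $[gc]_p = c - b \equiv c \pmod b$, so $x = c$ satisfies the collision congruence. By Lemma~\ref{lem:collision-congruence}, $C(g) \ge 1$.

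The only step needing care is the reduction $[gc]_p = c - b$. This is precisely where $c \ge b+1$ is used: it guarantees that $c-b$ lies in $\{1,\dots,p-1\}$ rather than being $0$ or negative, so no multiple of $p$ must be added. Adding a multiple of $p$ could destroy the congruence modulo $b$, because $p \not\equiv 0 \pmod b$.

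I would also remark on the two excluded values. The case $c = b$ cannot occur for a unit $g$, since it forces $1 - g \equiv 1$, that is, $g \equiv 0$. The case $c \le b-1$ is handled by Lemma~\ref{lem:no-collision}. Together the two lemmas therefore characterize the zero set of $C$, which sets up the gate width theorem via $c = u$, $g \equiv 1 - b/u \equiv -(b-u)/u$ (or its analogue in the paper's parametrization).
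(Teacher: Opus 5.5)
Your proof is correct and is essentially the paper's argument. The paper uses the witness $x = p - c$, for which $gx \equiv x + b \pmod p$ with $x + b \le p-1$; you use its mirror image $x = c$ under $x \mapsto p - x$, for which $[gc]_p = c - b$ with no wrap-around, and in both versions the hypothesis $c \ge b+1$ serves only to keep the shifted value inside $\{1,\ldots,p-1\}$ so that the congruence modulo $b$ survives reduction modulo $p$.
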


\begin{proof}
Take $x = p - c$. Then $1 \le x \le p{-}b{-}1$, so
$y = x + b$ lies in $\{1, \ldots, p{-}1\}$ with
$y \equiv x \pmod{b}$. Since $c(1{-}g) \equiv b$, we
get $gx \equiv x + b = y \pmod{p}$, producing a
collision.
\end{proof}

\begin{theorem}[Gate width]\label{thm:gate-width}
For any prime $p > b$,
\[
\{g \in (\mathbb{Z}/p\mathbb{Z})^{\times} : C(g) = 0\}
= \left\{-\frac{u}{b - u} \bmod p :
u = 1, \ldots, b{-}1\right\}.
\]
In particular, exactly $b - 1$ multipliers have $C(g) = 0$,
independent of~$p$.
\end{theorem}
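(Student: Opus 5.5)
The plan is to combine Lemmas~\ref{lem:no-collision} and~\ref{lem:collision-exists} into an exact characterization of the zero set in terms of the auxiliary quantity $c$. Then translate the condition on $c$ back into a condition on $g$.

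First I dispose of $g = 1$. Here $c$ is undefined, but every $r$ trivially collides with itself, so $C(1) = p - 1 \ge 1$. I also check that $1$ is not in the right-hand family. Indeed $-u/(b-u) \equiv 1$ would force $-u \equiv b - u$, that is $b \equiv 0 \pmod p$, which is impossible for $p > b$.

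For $g \ne 1$, the element $1 - g$ is invertible, so $c \equiv b(1-g)^{-1}$ is a well-defined element of $\{1, \ldots, p-1\}$. The map $g \mapsto c$ is a bijection from $(\mathbb{Z}/p\mathbb{Z})^{\times} \setminus \{1\}$ onto the set of $c \not\equiv b$. The excluded value is explained as follows: $c \equiv b$ would mean $1 - g \equiv 1$, i.e.\ $g = 0$, which is not a unit. Since $p > b$, the integer $b$ lies in $\{1, \ldots, p-1\}$, so the only excluded integer value is exactly $c = b$. Hence every admissible $c$ satisfies either $1 \le c \le b-1$ or $c \ge b+1$, and these two cases are exhaustive. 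Lemma~\ref{lem:no-collision} gives $C(g) = 0$ in the first case, and Lemma~\ref{lem:collision-exists} gives $C(g) \ge 1$ in the second. So for $g \ne 1$, $C(g) = 0$ holds exactly when $c \in \{1, \ldots, b-1\}$.

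Next I invert the relation for $c = v \in \{1, \ldots, b-1\}$. From $1 - g \equiv b/v$ we get $g \equiv (v - b)/v = -(b-v)/v$. Substituting $u = b - v$, which again runs over $\{1, \ldots, b-1\}$, gives $g \equiv -u/(b-u)$, matching the stated family. All inverses exist because $1 \le u, b-u < p$.

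For the count, the values $c = 1, \ldots, b-1$ are distinct residues mod $p$ (again using $p > b$), and $g \mapsto c$ is injective. So the $b-1$ multipliers are pairwise distinct, and the zero set has exactly $b-1$ elements, independent of $p$.

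The only delicate point is the boundary value $c = b$. Neither lemma covers it, so the argument relies on showing it never arises; this is precisely where the hypothesis $p > b$ enters. The rest is bookkeeping of the reindexing $u \leftrightarrow b-u$.
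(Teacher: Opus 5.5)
Your proposal is correct and follows the paper's proof. You combine Lemmas~\ref{lem:no-collision} and~\ref{lem:collision-exists} to get $C(g)=0$ exactly when $c\in\{1,\ldots,b-1\}$, substitute $u=b-c$, and use $p>b$ for distinctness. You are also more careful than the paper's one-line argument, which silently skips $g=1$ (where $c$ is undefined) and the boundary value $c=b$ (which you correctly rule out because it would force $g=0$).
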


\begin{proof}
By Lemmas~\ref{lem:no-collision}
and~\ref{lem:collision-exists}, $C(g) = 0$ if and only if
$c = b(1{-}g)^{-1} \in \{1, \ldots, b{-}1\}$. Setting
$u = b - c$ gives $g = -u/(b{-}u) \bmod p$ for
$u = 1, \ldots, b{-}1$. These are distinct since
$p > b$.
\end{proof}

\begin{remark}
The deranging multipliers form a rational family
parameterized by $u$. The count $b - 1$ depends only
on the base, not on the prime. No primitive-root
hypothesis is needed.
\end{remark}

% ============================================================
\section{The Finite Determination Theorem}

\begin{theorem}[Finite determination]\label{thm:finite}
For any base $b \ge 2$, lag $\ell \ge 1$, and integer
$p > b^{\ell+1}$ with $\gcd(p, b) = 1$, the collision
deviation $S_{\ell}(p)$ depends only on
$p \bmod b^{\ell+1}$.
\end{theorem}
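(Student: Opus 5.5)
The plan is to read $C(b^{\ell} \bmod p)$ as a statement about base-$b$ digits of the fraction $r/p$, and then count residues in intervals of length $p/m$, where $m = b^{\ell+1}$.

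\textbf{Step 1: digit reformulation.} Write $r/p = 0.d_1 d_2 d_3\ldots$ in base~$b$. Then $\delta(r) = \lfloor br/p\rfloor = d_1$. Also $[b^{\ell} r]_p/p = \{b^{\ell} r/p\}$, whose leading digit is $d_{\ell+1}$. Hence
\[
C(b^{\ell} \bmod p) = \#\{r \in \{1,\ldots,p{-}1\} : d_1(r/p) = d_{\ell+1}(r/p)\}.
\]
This condition depends only on the first $\ell+1$ digits of $r/p$, that is, on the integer $k = \lfloor m r/p \rfloor \in \{0,\ldots,m-1\}$. Let $K$ be the set of $k$ whose $(\ell+1)$-digit base-$b$ expansion has equal first and last digit. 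Then $|K| = b^{\ell}$, and $K$ depends only on $b$ and $\ell$.

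\textbf{Step 2: counting residues in each interval.} Write $p = qm + a$ with $0 \le a < m$; here $\gcd(a,b)=1$ and $q \ge 1$ since $p > m$. For each $k$, the number of integers $r \in [0,p)$ with $\lfloor mr/p\rfloor = k$ is
\[
N_k = \lceil (k{+}1)p/m\rceil - \lceil kp/m\rceil = q + \lceil (k{+}1)a/m\rceil - \lceil ka/m\rceil .
\]
The ceiling form is the correct one because $kp/m$ is an integer only when $m \mid k$, i.e.\ only for $k=0$, by $\gcd(p,b)=1$; so no boundary ambiguity arises for $1 \le k \le m-1$. The value $r=0$ lies in the interval for $k=0 \in K$ and must be removed. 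Therefore
\[
C(b^{\ell} \bmod p) = q\,b^{\ell} - 1 + \sum_{k\in K}\bigl(\lceil (k{+}1)a/m\rceil - \lceil ka/m\rceil\bigr).
\]

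\textbf{Step 3: comparison with the bin size.} We have $\lfloor (p-1)/b\rfloor = \lfloor (qm + a - 1)/b\rfloor = q\,b^{\ell} + \lfloor (a-1)/b\rfloor$. Subtracting, the $q\,b^{\ell}$ terms cancel and
\[
S_{\ell}(p) = -1 - \lfloor (a-1)/b\rfloor + \sum_{k\in K}\bigl(\lceil (k{+}1)a/m\rceil - \lceil ka/m\rceil\bigr),
\]
which is a function of $a = p \bmod m$ alone.

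The main obstacle is the endpoint bookkeeping in Step 2. One must confirm that $r/p$ never lands exactly on a grid point $k/m$ with $k \neq 0$; this follows from $\gcd(p,b)=1$. One must also make sure the excluded residue $r=0$ is handled consistently. The hypothesis $p > m$ is used only to make $q \ge 1$ and $a$ the genuine residue, so that the cancellation in Step 3 is exact.
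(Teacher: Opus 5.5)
Your proof is correct and follows essentially the same route as the paper: slice by $k=\lfloor mr/p\rfloor$, keep the $b^{\ell}$ good slices whose first and last base-$b$ digits agree, count the integers in each slice after writing $p = qm + a$, and subtract $Q$. The differences are only cosmetic. You count with ceilings, whereas the paper uses floors, which still works because the endpoint slices $0$ and $m-1$ are both good and the two off-by-one shifts cancel. You also write $\lfloor (a-1)/b\rfloor$, which equals the paper's $\lfloor a/b\rfloor$ since $b \nmid a$.
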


\begin{proof}
Set $m = b^{\ell+1}$. For $r \in \{1, \ldots, p{-}1\}$,
define the slice index $n(r) = \lfloor mr/p \rfloor$.
Since the slice width $p/m > 1$, each slice contains
integers. The digit function and the collision multiplier
are constant on each slice:
\[
\delta(r) = \left\lfloor \frac{n}{b^{\ell}} \right\rfloor,
\qquad
\delta(b^{\ell} r \bmod p) = n \bmod b.
\]

A slice $n$ contributes to the collision count if and
only if $\lfloor n/b^{\ell} \rfloor = n \bmod b$. Define
the good-slice set
\[
G_{\ell,b} = \{n \in \{0, \ldots, m{-}1\} :
\lfloor n/b^{\ell} \rfloor = n \bmod b\}.
\]
Then $|G_{\ell,b}| = b^{\ell}$ and
\[
C(b^{\ell} \bmod p) = -1 + \sum_{n \in G}
\left(
\left\lfloor \frac{(n{+}1)p}{m} \right\rfloor
- \left\lfloor \frac{np}{m} \right\rfloor
\right).
\]

Writing $p = mt + a$ with $1 \le a < m$, each summand
becomes $t + \lfloor(n{+}1)a/m\rfloor
- \lfloor na/m \rfloor$, and since
$Q = b^{\ell} t + \lfloor a/b \rfloor$:
\[
S_{\ell}(p) = -1 - \left\lfloor \frac{a}{b} \right\rfloor
+ \sum_{n \in G}
\left(
\left\lfloor \frac{(n{+}1)a}{m} \right\rfloor
- \left\lfloor \frac{na}{m} \right\rfloor
\right),
\]
which depends only on $a = p \bmod m$.
\end{proof}

\begin{remark}
Primality is not required. The result holds for all
$p > m$ with $\gcd(p, b) = 1$. The modulus $b^{\ell+1}$
is sharp: reduction modulo $b^{\ell}$ does not determine
$S_{\ell}$.
\end{remark}

% ============================================================
\section{The Reflection Identity}

\begin{theorem}[Reflection]\label{thm:reflection}
For $m = b^{\ell+1}$ and any unit
$a \in (\mathbb{Z}/m\mathbb{Z})^{\times}$:
\[
S_{\ell}(a) + S_{\ell}(m - a) = -1.
\]
\end{theorem}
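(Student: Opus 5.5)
The plan is to work entirely with the closed form obtained in the proof of Theorem~\ref{thm:finite}. By that theorem, $S_\ell$ is a well-defined function of the residue $a \in (\mathbb{Z}/m\mathbb{Z})^{\times}$, given by
\[
S_{\ell}(a) = -1 - \left\lfloor \frac{a}{b} \right\rfloor + \sum_{n \in G} \Delta_a(n),
\qquad
\Delta_a(n) = \left\lfloor \frac{(n{+}1)a}{m} \right\rfloor - \left\lfloor \frac{na}{m} \right\rfloor .
\]
I will compare $\Delta_{m-a}(n)$ with $\Delta_a(n)$ term by term, then add the two formulas.

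\emph{Step 1: complementing the multiplier.} Since $n(m-a)/m = n - na/m$, we have $\lfloor n(m-a)/m\rfloor = n - \lceil na/m\rceil$. Hence
\[
\Delta_{m-a}(n) = 1 - \left(\left\lceil \frac{(n{+}1)a}{m}\right\rceil - \left\lceil \frac{na}{m}\right\rceil\right).
\]
Because $\gcd(a,m)=1$, the number $ka/m$ is an integer only when $m \mid k$. So for $1 \le k \le m-1$ we have $\lceil ka/m\rceil = \lfloor ka/m\rfloor + 1$. It follows that the ceiling difference equals $\Delta_a(n)$ for $1 \le n \le m-2$, so $\Delta_{m-a}(n) = 1 - \Delta_a(n)$ there.

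\emph{Step 2: the boundary slices.} These are the main obstacle, since the ceiling/floor correction fails at the two endpoints.
\begin{itemize}
\item For $n=0$, the ceiling difference is $1$ while $\Delta_a(0)=0$.
\item For $n=m-1$, the ceiling difference is $\Delta_a(m-1)-1$.
\end{itemize}
Both slices lie in $G$. For $n=0$ we have $\lfloor 0/b^\ell\rfloor = 0 = 0 \bmod b$. For $n = m-1$ we have $\lfloor (m-1)/b^\ell\rfloor = b-1 = (m-1)\bmod b$. The two errors, $-1$ at $n=0$ and $+1$ at $n=m-1$, therefore cancel when summed over $G$. This gives
\[
\sum_{n\in G}\Delta_{m-a}(n) = |G| - \sum_{n\in G}\Delta_a(n) = b^\ell - \sum_{n\in G}\Delta_a(n).
\]

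\emph{Step 3: assemble.} Adding the two formulas gives
\[
S_\ell(a)+S_\ell(m-a) = -2 - \left\lfloor \frac{a}{b}\right\rfloor - \left\lfloor \frac{m-a}{b}\right\rfloor + b^\ell .
\]
Since $a$ is a unit modulo $m = b^{\ell+1}$, we have $b \nmid a$. As $b \mid m$, the quantities $a/b$ and $(m-a)/b$ are non-integers summing to the integer $m/b = b^\ell$. Their floors therefore sum to $b^\ell - 1$, and the total is $-2 - (b^\ell - 1) + b^\ell = -1$.
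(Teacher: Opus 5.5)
Your proof is correct and takes essentially the same route as the paper. Both start from the closed form of Theorem~\ref{thm:finite}, prove $\Delta_{m-a}(n) = 1 - \Delta_a(n)$ on the interior slices $1 \le n \le m-2$, treat the endpoint slices $n = 0$ and $n = m-1$ (both in $G$) separately, and finish with $\lfloor a/b\rfloor + \lfloor (m-a)/b\rfloor = b^{\ell} - 1$. The only difference is endpoint bookkeeping: the paper evaluates $d_0 = 0$ and $d_{m-1} = 1$ directly, while you record the endpoints as $-1$ and $+1$ deviations from the complement rule that cancel, which is the same computation.
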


\begin{proof}
Write $d_n(a) = \lfloor(n{+}1)a/m\rfloor
- \lfloor na/m\rfloor \in \{0, 1\}$ for the slice
contribution. By Theorem~\ref{thm:finite}:
\[
S(a) = -1 - \left\lfloor \frac{a}{b} \right\rfloor
+ \sum_{n \in G} d_n(a).
\]

\emph{Interior slices} ($1 \le n \le m{-}2$, so
$m \nmid na$ and $m \nmid (n{+}1)a$): the identity
$\lfloor n(m{-}a)/m \rfloor = n - 1 - \lfloor na/m \rfloor$
gives $d_n(m{-}a) = 1 - d_n(a)$.

\emph{Endpoint slices}: $d_0(a) = 0$ for all units
(since $0 < a < m$), and $d_{m-1}(a) = 1$ for all units
(since $\lfloor(m{-}1)a/m\rfloor = a - 1$). Both $n = 0$
and $n = m{-}1$ lie in~$G$, so they contribute
$d_0(a) + d_0(m{-}a) = 0$ and
$d_{m-1}(a) + d_{m-1}(m{-}a) = 2$.

Since $\lfloor a/b \rfloor + \lfloor(m{-}a)/b\rfloor
= b^{\ell} - 1$ (because $b \nmid a$ for units), the
sum over all $|G| = b^{\ell}$ good slices gives:
\[
S(a) + S(m{-}a) = -2 - (b^{\ell}{-}1) + 0 + 2
+ (b^{\ell}{-}2) = -1. \qedhere
\]
\end{proof}

\begin{corollary}[Grand mean]\label{cor:grand-mean}
The average of $S_{\ell}$ over all units modulo
$b^{\ell+1}$ is $-1/2$.
\end{corollary}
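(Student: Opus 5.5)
The corollary follows from Theorem~\ref{thm:reflection} by pairing each unit with its reflection. Let $U = (\mathbb{Z}/m\mathbb{Z})^{\times}$ with $m = b^{\ell+1}$. Throughout, I read $S_{\ell}(a)$ for $a \in \{1,\ldots,m-1\}$ through the finite formula of Theorem~\ref{thm:finite}, namely
\[
S_{\ell}(a) = -1 - \left\lfloor \frac{a}{b} \right\rfloor + \sum_{n \in G} d_n(a),
\]
which is also how it is used in the proof of the reflection identity.

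\textbf{Step 1: the reflection is a fixed-point-free involution of $U$.} The map $\iota(a) = m - a$ sends units to units, since $\gcd(m-a, m) = \gcd(a, m)$. It satisfies $\iota\circ\iota = \mathrm{id}$. A fixed point would require $2a = m$. Then $a = m/2$ would share the factor $2$ with $m$ when $b$ is even. When $b$ is odd, $m$ is odd and $2a = m$ has no integer solution. Either way $\iota$ has no fixed points. Hence $U$ splits into $\phi(m)/2$ disjoint pairs $\{a, m-a\}$, and in particular $\phi(m)$ is even. This also follows independently from $m \ge 4 > 2$.

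\textbf{Step 2: sum over the pairs.} Choose one representative $a$ from each pair and write $P$ for this set of representatives. Then
\[
\sum_{a \in U} S_{\ell}(a) = \sum_{a \in P} \bigl( S_{\ell}(a) + S_{\ell}(m-a) \bigr) = \sum_{a \in P} (-1) = -\frac{\phi(m)}{2},
\]
where the middle equality is Theorem~\ref{thm:reflection} applied to each pair. Dividing by $|U| = \phi(m)$ gives the mean $-1/2$.

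\textbf{Main obstacle.} The only real point is Step 1: that the reflection pairs the units perfectly, with no fixed point to spoil the count. Beyond that, the work lies in interpretation. The phrase ``average of $S_{\ell}$ over units'' only makes sense if $S_{\ell}$ is regarded as a function on $U$. This is justified by Theorem~\ref{thm:finite}, since $S_{\ell}(p)$ depends only on $p \bmod m$, and every such residue of an admissible $p$ with $\gcd(p,b)=1$ is a unit.
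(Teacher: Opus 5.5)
Your proposal is correct and follows the paper's argument: pair each unit $a$ with $m-a$ and apply the reflection identity to each pair, so every pair contributes $-1$ and the mean is $-1/2$. The only difference is that you explicitly verify that $a \mapsto m-a$ has no fixed points on the units (using $m = b^{\ell+1} \ge 4$), which the paper leaves implicit.
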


\begin{proof}
The map $a \mapsto m - a$ is an involution on the units.
By Theorem~\ref{thm:reflection}, each pair averages
to $-1/2$.
\end{proof}

% ============================================================
\section{The Half-Group Structure}

The finite determination formula expresses $S_{\ell}(a)$ as
a sum over good slices $n \in G_{\ell,b}$. Each summand
$\lfloor(n{+}1)a/m\rfloor - \lfloor na/m\rfloor$ equals
$1$ when $(n{+}1)a$ wraps past a multiple of $m$ and $0$
otherwise. The \emph{wrapping set} for good slice $n$ is
\[
W_n = \{a \in (\mathbb{Z}/m\mathbb{Z})^{\times} :
(n{+}1)a \bmod m < a\}.
\]

\begin{theorem}[Half-group]\label{thm:half}
For every non-trivial good slice $n$ (with
$(n{+}1) \not\equiv 0, 1 \pmod{m}$):
$|W_n| = \phi(m)/2$. Every good slice except $n = 0$
and $n = m - 1$ is non-trivial, since $1 < n + 1 < m$
for $n \in \{1, \ldots, m{-}2\}$.
\end{theorem}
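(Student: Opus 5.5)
The plan is to use the bilateral involution $a \mapsto m - a$ on $(\mathbb{Z}/m\mathbb{Z})^{\times}$, as in the proof of Theorem~\ref{thm:reflection}, and to show that it swaps $W_n$ with its complement. Write $k = n+1$, so that $k \not\equiv 0, 1 \pmod{m}$. For a unit $a$, write $r(a) = ka \bmod m \in \{0, \ldots, m-1\}$.

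First I rule out the two degenerate values of $r(a)$. If $r(a) = 0$, then $m \mid ka$, and since $a$ is a unit this forces $m \mid k$, which is excluded. If $r(a) = a$, then $m \mid (k-1)a$, so $m \mid k-1$, which is also excluded. Hence for every unit $a$ we have $r(a) \neq 0$ and $r(a) \neq a$. So exactly one of $r(a) < a$ or $r(a) > a$ holds.

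Next I compute $r(m-a)$. Since $k(m-a) \equiv -ka \pmod{m}$ and $r(a) \neq 0$, we get $r(m-a) = m - r(a)$. Therefore $m - a \in W_n$ if and only if $m - r(a) < m - a$, that is, if and only if $r(a) > a$. Combined with the dichotomy above, exactly one of $a$ and $m-a$ lies in $W_n$.

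Finally I check that the involution has no fixed points on the units. The equation $a = m - a$ means $a = m/2$. Since $\ell \ge 1$ gives $m = b^{\ell+1} \ge 4$, the element $m/2$, when it is an integer, satisfies $\gcd(m/2, m) = m/2 > 1$, so it is not a unit. Thus the units split into $\phi(m)/2$ disjoint pairs $\{a, m-a\}$, each contributing exactly one element to $W_n$, which gives $|W_n| = \phi(m)/2$.

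The remark about good slices is immediate. For $n \in \{1, \ldots, m-2\}$ we have $2 \le k \le m-1$, so $k \not\equiv 0, 1 \pmod{m}$. I do not expect any serious obstacle. The only points needing care are the two exclusions $r(a) \neq 0$ and $r(a) \neq a$, both of which use the invertibility of $a$.
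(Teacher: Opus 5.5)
Your proposal is correct and matches the paper's proof: the reflection $a \mapsto m-a$ swaps $W_n$ with its complement, and the case $ka \equiv a \pmod{m}$ is excluded by $k \not\equiv 1$ together with invertibility of $a$. You also state explicitly that $ka \bmod m \neq 0$, which is needed for $k(m-a) \bmod m = m - (ka \bmod m)$ and which the paper uses without comment; your fixed-point check is harmless but redundant, since the swap property already forces $a \neq m-a$.
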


\begin{proof}
Let $c = n + 1$ and $a \in W_n$, so $ca \bmod m < a$.
Then $c(m{-}a) \bmod m = m - (ca \bmod m) > m - a$,
so $m - a \notin W_n$.

Conversely, if $a \notin W_n$, then $ca \bmod m \ge a$.
Equality is excluded: $ca \equiv a \pmod{m}$ would
require $c \equiv 1 \pmod{m}$, contradicting the
hypothesis. So $ca \bmod m > a$, giving
$c(m{-}a) \bmod m = m - (ca \bmod m) < m - a$,
so $m - a \in W_n$.

Thus $a \mapsto m - a$ is a bijection between $W_n$
and its complement, and $|W_n| = \phi(m)/2$.
\end{proof}

The wrapping set is always exactly half the group
because the reflection $a \mapsto m - a$ swaps wrapping
with non-wrapping. This is the same symmetry that
produces the reflection identity: the bilateral pairing
of residues governs both the global structure of
$S_{\ell}$ and the local structure of each good slice.

% ============================================================
\section{Computational Verification}

All results are verified computationally using the
\texttt{nfield} engine~\cite{nfield}.

\begin{table}[h]
\centering
\begin{tabular}{rrrrr}
\toprule
$b$ & $p$ & $Q$ & deranging count & formula \\
\midrule
$10$ & $17$ & $1$ & $9$ & $b - 1$ \\
$10$ & $97$ & $9$ & $9$ & $b - 1$ \\
$10$ & $193$ & $19$ & $9$ & $b - 1$ \\
$7$ & $41$ & $5$ & $6$ & $b - 1$ \\
$12$ & $67$ & $5$ & $11$ & $b - 1$ \\
\bottomrule
\end{tabular}
\caption{Gate width verified across bases and primes.}
\end{table}

\begin{table}[h]
\centering
\begin{tabular}{rrrr}
\toprule
$b$ & modulus & classes & determined \\
\midrule
$3$ & $9$ & $6$ & yes \\
$5$ & $25$ & $20$ & yes \\
$7$ & $49$ & $42$ & yes \\
$10$ & $100$ & $40$ & yes \\
\bottomrule
\end{tabular}
\caption{Finite determination verified: $S_1$ constant
within each class modulo $b^2$.}
\end{table}

% ============================================================
\section{Remarks}

The collision invariant $S_{\ell}$ is a new arithmetic
function on primes arising from the interaction between
additive structure (contiguous bins) and multiplicative
structure (the action of $b^{\ell}$). The finite
determination theorem reduces its study to a
combinatorial problem on the finite group
$(\mathbb{Z}/b^{\ell+1}\mathbb{Z})^{\times}$.

The reflection identity provides exact structural
information: the grand mean is $-1/2$ and the values
pair under $a \mapsto m - a$. These properties are
consequences of the bin geometry, not of the distribution
of primes.

A companion paper~\cite{paperB} analyzes the prime-sum
behavior of $S_{\ell}$ via its Dirichlet character
decomposition over the finite group, showing that the
centered prime harmonic sum converges.

% ============================================================

\end{document}